\documentclass{amsart}
\usepackage{graphicx} % Required for inserting images
\usepackage{xfrac}
\usepackage{amsthm}
\usepackage{amsmath}
\usepackage{latexsym, amssymb}
\usepackage{mathtools}
\usepackage{color}
\usepackage{xcolor}
\usepackage{comment}
\usepackage{tikz-cd}
\usepackage[hidelinks]{hyperref}
\usepackage[all]{xy}

\newtheorem{thm}{Theorem}[section] %the resolution could also be [subsection]

\newtheorem{cor}[thm]{Corollary}
\newtheorem{defn}[thm]{Definition}

\newtheorem{lem}[thm]{Lemma}

\theoremstyle{definition}

\newtheorem*{thm*}{Theorem}

\newcommand\operA[2]{{\if!#2!\operatorname{#1}\else{\operatorname{#1}_{#2}^{\phantom{I}}}\fi}} % To be used within Bdefs. Usage: $\operA{N}{K/F}$ produces $N_{K/F}$; $\operA{N}{}$ produces $N$.
\newcommand\set[1]{\{#1\}}

% \eqref{#1} %
% \eqref{#1} %
%
%
%
\newcommand\Cref[1]{{Corollary~\ref{#1}}}%
%
%
%
%
%

%\def\Q{\mathbb{Q}}

 % The centralizer

 % The multiplicative group
 % Produces nicely spaced [K:F]. Don't use in subscripts or superscripts -- there LaTeX manages by his own.
\newcommand{\Trace}[1][]{\if!#1!\operatorname{Tr}\else{\operatorname{Tr}_{#1}^{\phantom{I}}}\fi} % Usage: $\Tr[K/F](a)$.

\long\def\forget#1\forgotten{{}} %
\newcommand\suchthat{{\,:\ \,}}

\def\({\left(}
\def\){\right)}

              % Quadratic form
 % Pfister form
 % Pfister form in char=2
%% \newcommand\QA[2]{{[#1,#2)}} % --- standard notation for quaternions; #1 corresponds to the separable subfield
 %--- mirror image of the standard notation
%\renewcommand\char{{characteristic}}
 % PPP2

\newcommand\LAY[3][]{{\begin{array}{c}\mbox{#2} \if#1!{}\else{+}\fi \\ \mbox{#3}\end{array}}}

%Commands Marco

%\renewcommand{\subjclass}{%
      %\textup{2020} Mathematics Subject Classification}

\makeatletter
\DeclareRobustCommand{\abs}{\@ifstar\star@abs\normal@abs}
\newcommand{\normal@abs}[2][]{\mathopen{#1|}#2\mathclose{#1|}}
\makeatother

\makeatletter

\def\ps@pprintTitle{%
 \let\@oddhead\@empty
 \let\@evenhead\@empty
 \def\@oddfoot{}%
 \let\@evenfoot\@oddfoot}

\newcommand{\bigperp}{%
  \mathop{\mathpalette\bigp@rp\relax}%
  \displaylimits
}

\newcommand{\bigp@rp}[2]{%
  \vcenter{
    \m@th\hbox{\scalebox{\ifx#1\displaystyle2.1\else1.5\fi}{$#1\perp$}}
  }%
}
\makeatother

\renewcommand{\geq}{\geqslant}
\renewcommand{\leq}{\leqslant}

\newif\iffurther
\furtherfalse
%\furthertrue

\title{How Associative Can a Non-Associative Moufang Loop Be?}
\author{Ilan Levin}
\address{Department of Mathematics, Bar-Ilan University, Ramat Gan, 55200, Israel}
\email{ilan7362@gmail.com}
\date{}
\subjclass{
{Primary: 20N05; Secondary:
05B07; 05E16; 60B99
}    }
\begin{document}

\begin{abstract}
    We prove a non-associative analog to the well-known $\frac{5}{8}$ Theorem. Namely, for a finite Moufang loop with nuclear commutators, we show that if the probability that three randomly chosen elements associate is greater than $\frac{43}{64}$, then the loop must be a group. The bound is tight as demonstrated by the 16-element Octonion loop.
\end{abstract}

\maketitle

The $\frac{5}{8}$ Theorem is a well-known observation in group theory; given a finite group $G$, if the probability of two randomly chosen elements in $G$ commuting exceeds $\frac{5}{8}$, then the group is commutative. This probability is usually denoted by $\mathbb{P}(G)$. The bound is sharp, as the probability of two randomly chosen elements in the Quaternion group commuting is indeed $\frac{5}{8}$. The result was proven and generalized to compact groups with Haar measure in \cite{Gustaf}.
This intriguing property has been studied in multiple different directions. In \cite{Rusin}, all groups with $\mathbb{P}(G) > \frac{11}{32}$ are completely classifed. In particular, it is shown that if $\mathbb{P}(G) > \frac{11}{32}$, then  $\mathbb{P}(G) \in \set{\frac{1}{2},\frac{7}{16},\frac{11}{27},\frac{2}{5},\frac{25}{64},\frac{3}{8}} \cup \set{\frac{1}{2} + \frac{1}{2^{2s+1}} \suchthat s \in \mathbb{N}}$. 
In \cite{Solvable}, it is shown that if two randomly chosen elements of a finite group $G$ generate a solvable group with probability exceeding $\frac{11}{30}$, then the whole group is solvable. Moreover, the bound is attained by the alternating group $A_5$.

\noindent This short note provides two non-associative analogs to the $\frac{5}{8}$ Theorem:
\begin{thm*}
    Let $G$ be a finite non-associative Moufang loop satisfying \break $[[G, G], G, G] = 1$. Then the probability that three uniformly chosen elements in $G$ associate does not exceed $\frac{43}{64}$.
\end{thm*}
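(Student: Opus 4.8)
The plan is to transpose the classical proof of the $\frac58$ Theorem to the non-associative setting, with the nucleus playing the role of the center and the associator playing the role of the commutator, and then to isolate a single combinatorial inequality about alternating trilinear forms. Let $N$ denote the nucleus of $G$ and write $V=G/N$. My first step is to extract the loop-theoretic content of the hypothesis $[[G,G],G,G]=1$: this says precisely that every commutator lies in $N$, so that $V$ is an abelian group. Combined with the Moufang identities, I would show that the associator $(x,y,z)$ is unchanged when any one of $x,y,z$ is multiplied by an element of $N$, and that it takes values in a central subgroup $A\leq N$ on which it is $\mathbb{Z}$-trilinear and alternating. Granting this, the associator descends to a nonzero (since $G$ is non-associative) alternating trilinear form $a\colon V\times V\times V\to A$ of finite abelian groups. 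Establishing genuine trilinearity and alternation directly from the Moufang laws and the nuclear-commutator hypothesis, rather than importing an octonion-like model, is the technical core of this step and the first place where care is required.

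With the form $a$ in hand I would compute the associativity probability by conditioning on the first two arguments. Since $G\to V$ has fibers of constant size and $a$ is constant on them, a triple associates exactly when $a(\bar x,\bar y,\bar z)=0$, whence
\[
\mathbb{P}_{\mathrm{assoc}}(G)=\frac{1}{|V|^{2}}\sum_{(u,v)\in V^{2}}\Pr_{w\in V}\bigl[a(u,v,w)=0\bigr].
\]
For fixed $(u,v)$ the slice $w\mapsto a(u,v,w)$ is a homomorphism $V\to A$: either it is identically $0$, making the inner probability $1$, or it has nontrivial image and hence kernel of index at least $2$, making the inner probability at most $\tfrac12$. Calling $(u,v)$ an \emph{associative pair} when its slice vanishes and letting $q$ be the proportion of associative pairs, this yields the clean bound $\mathbb{P}_{\mathrm{assoc}}(G)\le\tfrac12+\tfrac12 q$, exactly parallel to the inequality $\mathbb{P}(G)\le\tfrac12+\tfrac12\,|Z(G)|/|G|$ underlying the $\frac58$ Theorem. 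Note that both $\mathbb{P}_{\mathrm{assoc}}(G)$ and $q$ are unchanged upon passing to the quotient of $V$ by the radical of $a$ (the radical directions are free and the slice condition depends only on the images modulo the radical), so I may assume $a$ is nondegenerate. Factoring $V$ into its $p$-primary components makes $a$ a direct sum and $q$ a product $\prod_p q_p$ with each $q_p<1$, so $q$ is largest when a single prime occurs; a further reduction to exponent $p$ then leaves $a$ a nondegenerate alternating trilinear form on some $\mathbb{F}_p^{d}$.

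It therefore remains to prove the key lemma $q\le\tfrac{11}{32}$ for every nonzero such form, with equality for the octonion loop, since $\tfrac12+\tfrac12\cdot\tfrac{11}{32}=\tfrac{43}{64}$. Here $\tfrac{11}{32}$ is exactly the proportion of linearly dependent ordered pairs in $\mathbb{F}_2^{3}$, the associative pairs for the determinant form, for which $\mathbb{P}_{\mathrm{assoc}}$ equals $(8^{3}-|\mathrm{GL}_3(\mathbb{F}_2)|)/8^{3}=(512-168)/512=\tfrac{43}{64}$; it is pleasant that the same constant $\tfrac{11}{32}$ governs Rusin's classification quoted in the introduction, and I would look for a conceptual reason tying associative pairs to the commuting structure. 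For the determinant on $\mathbb{F}_p^{3}$ with $p\ge3$ one computes $q<\tfrac{11}{32}$ directly, and more generally an odd prime forces every nonzero slice to have image of order at least $3$, so the odd case stays well below $\tfrac{43}{64}$.

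I expect the lemma at $p=2$ to be the decisive obstacle. The danger is that some exotic nondegenerate trivector over $\mathbb{F}_2$ in higher dimension—for instance the $7$-dimensional cross-product form associated with the octonions, or the forms appearing in dimensions $d\ge5$ where no reduction to $d=3$ is available—could produce a larger proportion of associative pairs than the $3$-dimensional determinant (recall that in dimension $d=4$ every nonzero alternating trilinear form is already degenerate, so that case reduces to $d=3$). The plan is to bound $q$ uniformly in $d$ by exploiting nondegeneracy: I would argue that a nondegenerate form forces the slice map to be nonzero for all but a tightly controlled family of pairs $(u,v)$, and that this family can never exceed the $\tfrac{11}{32}$ proportion realized by the determinant. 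Turning this heuristic into a bound valid in all dimensions at once, rather than a finite case check, is the crux of the entire argument.
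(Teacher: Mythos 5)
Your route is genuinely different from the paper's, but it has two gaps, and the first is fatal as stated. The reduction of the theorem to an alternating trilinear form on $V=G/\mathfrak{N}$ is not available under the stated hypothesis: $[[G,G],G,G]=1$ says only that $[G,G]\subseteq \mathfrak{N}$, and it does \emph{not} place associators in $\mathfrak{N}$ (let alone in a central subgroup $A$), so there is no reason for the slice $z\mapsto[z,y,x]$ to be a homomorphism or for the associator to descend multilinearly. Concretely, the free commutative Moufang loop of exponent $3$ on four generators satisfies your hypothesis vacuously (here $[G,G]=1$), is finite, non-associative, and of nilpotency class $3$; its quotient by the nucleus (which equals the center, the loop being commutative) is still a non-associative commutative Moufang loop. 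So for this $G$, which the theorem must cover, your $V$ is not even a group, and no trilinear form $a\colon V\times V\times V\to A$ exists. What the hypothesis actually yields --- via Bruck's Lemma VII.5.4, since $[y,x]\in\mathfrak{N}$ makes $L(x,y)$ an automorphism sending $z$ to $z[z,y,x]^{-1}$ --- is only that the vanishing set $\partial_{x,y}=\set{z \suchthat [z,y,x]=1}$ is a \emph{subloop}. That weaker fact does recover your slice bound of $\frac12$ (a proper subloop has index at least $2$), but it breaks every subsequent reduction in your plan (quotient by the radical, $p$-primary splitting, reduction to exponent $p$), all of which need genuine multilinearity.

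Second, even granting the reduction, your key lemma --- $q\le\frac{11}{32}$ for every nonzero alternating trilinear form over $\mathbb{F}_2$, uniformly in the dimension --- is left entirely unproven; you name it yourself as the crux and offer only a heuristic, so the bound $\frac{43}{64}$ is not established. The paper needs none of this machinery: for $x\notin\mathfrak{N}$ it picks $u,v$ with $[x,u,v]\ne 1$, observes that $\mathfrak{N}\le\set{x}'\le\partial_{x,u}\le G$ is a strictly ascending chain of subloops, and uses the Lagrange property for Moufang loops to get $\abs{\set{x}'}\le\frac{\abs{G}}{4}$ and $\abs{\partial_{x,y}}\le\frac{\abs{G}}{2}$, which gives the probability bound $\frac{5}{8}+\frac{3}{8}\cdot\frac{\abs{\mathfrak{N}}}{\abs{G}}$; combined with $[G:\mathfrak{N}]\ge 8$ (a two-generator argument: a Moufang loop generated by a nuclear subloop and two further elements is associative, by Bruck's Theorem VII.4.2), this yields $\frac{43}{64}$ directly. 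Your conditioning skeleton, including the inequality $\mathbb{P}\le\frac12+\frac12 q$, is structurally close to the paper's case split, so the repair is to replace the form-theoretic analysis with these subloop-index arguments rather than to pursue the trivector classification, which both overreaches (it is false in the stated generality) and underdelivers (its decisive estimate remains open).
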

\begin{thm*}
    Let $G$ be a finite non-associative CC loop. Then the probability that three uniformly chosen elements in $G$ associate does not exceed $\frac{7}{8}$.
\end{thm*}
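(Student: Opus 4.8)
The plan is to convert the associativity probability into the vanishing probability of a single multilinear map. Writing the associator $[x,y,z]$ via $(xy)z = [x,y,z]\cdot x(yz)$, a triple associates precisely when $[x,y,z]=1$, so
\[
\mathbb{P}_{\mathrm{assoc}}(G)=\frac{1}{|G|^{3}}\,\#\set{(x,y,z)\in G^{3}\suchthat [x,y,z]=1}.
\]
First I would invoke the structure theory of CC loops: the nucleus $N$ is a normal subloop, the quotient $G/N$ is an abelian group, and every associator lies in $N$ (since $G/N$ is a group). A short computation—pushing a nuclear element $n$ through one argument at a time, e.g. $(xy)(zn)=((xy)z)n$ and $x(y(zn))=(x(yz))n$, so that $[x,y,zn]=[x,y,z]$, and similarly in the other two slots—shows $[x,y,z]$ depends only on the cosets $xN,yN,zN$. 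Hence the associator descends to a map $b\colon (G/N)^{3}\to N$, and, since every fibre of $G^{3}\to (G/N)^{3}$ has size $|N|^{3}$,
\[
\mathbb{P}_{\mathrm{assoc}}(G)=\frac{1}{|G/N|^{3}}\,\#\set{(\bar x,\bar y,\bar z)\in (G/N)^{3}\suchthat b(\bar x,\bar y,\bar z)=1}.
\]

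The crucial structural claim, and the step I expect to be the main obstacle, is that $b$ is a \emph{trihomomorphism}: with two arguments fixed it is a homomorphism $G/N\to N$ in the remaining one. Coset-independence already makes $b$ well defined on the abelian group $G/N$; what is missing is genuine linearity such as $[x,y,z_{1}z_{2}]=[x,y,z_{1}]\,[x,y,z_{2}]$ together with its analogues in the first and middle slots. This is a loop identity that fails in general loops, so the work is to extract it from the conjugacy-closed relations for the left and right translations (the relations expressing $L_xL_yL_x^{-1}$ and $R_xR_yR_x^{-1}$ as single translations). I would establish it by rewriting both sides of each proposed linearity with these CC relations and reducing modulo $N$; this is where the hypothesis that $G$ is a CC loop is genuinely used, and I anticipate it is the most delicate computation.

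Granting trihomomorphicity, the remainder is a clean counting lemma: if $V$ is a finite group, $N$ any group, and $b\colon V^{3}\to N$ is a trihomomorphism with $b\not\equiv 1$, then $\mathbb{P}[b=1]\le \tfrac{7}{8}$. I would prove this by iterating the bilinear case. For a nontrivial bihomomorphism $B\colon V^{2}\to N$, the set $U=\set{x\suchthat B(x,\cdot)\equiv 1}=\bigcap_{y}\ker B(\cdot,y)$ is a proper subgroup, and for $x\notin U$ the set $\set{y\suchthat B(x,y)=1}=\ker B(x,\cdot)$ is a proper subgroup; counting gives
\[
\mathbb{P}_{x,y}[B=1]\le \tfrac{|U|}{|V|}+\tfrac12\bigl(1-\tfrac{|U|}{|V|}\bigr)=\tfrac12+\tfrac12\tfrac{|U|}{|V|}\le \tfrac34 .
\]
Now condition on the third variable: $W=\set{z\suchthat b(\cdot,\cdot,z)\equiv 1}=\bigcap_{x,y}\ker b(x,y,\cdot)$ is a proper subgroup of $V$ (proper since $b\not\equiv1$), for $z\notin W$ the form $B_{z}=b(\cdot,\cdot,z)$ is a nontrivial bihomomorphism, and so
\[
\mathbb{P}[b=1]\le \tfrac{|W|}{|V|}\cdot 1+\bigl(1-\tfrac{|W|}{|V|}\bigr)\cdot\tfrac34=\tfrac34+\tfrac14\tfrac{|W|}{|V|}\le \tfrac78 .
\]
Finally, since $G$ is non-associative, some associator is nontrivial, i.e. $b\not\equiv 1$ on $V=G/N$, so applying the lemma with $V=G/N$ yields $\mathbb{P}_{\mathrm{assoc}}(G)\le \tfrac78$; the tightness would then be read off from a CC loop in which $W$ has index exactly $2$ and each $B_z$ attains the bilinear bound $\tfrac34$.
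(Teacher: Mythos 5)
Your overall architecture — condition on three nested ``kernels,'' each of index at least $2$, and compute $\frac{3}{4}+\frac{1}{4}\cdot\frac{1}{2}=\frac{7}{8}$ — reproduces exactly the paper's counting (the paper conditions on $x\in\mathfrak{N}$, then $y\in\set{x}'$, then $z\in\partial_{x,y}$; you condition in the opposite order, which is immaterial since associator vanishing in CC loops is permutation-invariant). But the step you yourself flag as the main obstacle is a genuine gap, because the statement you propose to prove is false as formulated. In a CC loop the associator is not a trihomomorphism, nor even literally constant on $\mathfrak{N}$-cosets: pushing a nuclear $n$ through the third slot of $(xy)z=(x(yz))[x,y,z]$ gives $[x,y,zn]=n^{-1}[x,y,z]\,n$, equality only up to conjugation, and the plain linearity $[x,y,z_1z_2]=[x,y,z_1][x,y,z_2]$ fails in general. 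What actually holds is Kunen's twisted identity $[x,y,t]\,z\,[x,y,z]=z\,[x,y,tz]$, a crossed-homomorphism relation, not a homomorphism. So the map $b\colon (G/\mathfrak{N})^3\to\mathfrak{N}$ you build your counting lemma on need not exist, and no amount of rewriting with the CC translation relations will produce it.

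The gap is repairable precisely because your counting never uses the values of $b$, only its vanishing sets, and the twisted identity still controls those: if $[x,y,t]=[x,y,z]=1$ then $[x,y,tz]=z^{-1}\cdot 1\cdot z\cdot 1=1$, so $\set{z \suchthat [x,y,z]=1}$ is a subloop (this is the paper's citation of Kinyon--Kunen, \cite[Lemma 2.9(5)]{PACC} that $\partial_{x,y}$ is a subloop), and similarly the conjugation-twisted coset computation shows vanishing is constant on fibers of $G^3\to(G/\mathfrak{N})^3$. The paper then avoids slot-wise linearity entirely: it observes $\set{x}'\subseteq\partial_{x,u}$ with $v\notin\partial_{x,u}$, so both sets have at most half the order of $G$ (Lagrange for CC loops, or the proper-subquasigroup bound), and $\mathfrak{N}\neq G$ gives the last factor of $\frac{1}{2}$; summing yields $\frac{\abs{\mathfrak{N}}}{\abs{G}}\cdot\frac{1}{4}+\frac{3}{4}\leq\frac{7}{8}$. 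Your appeal to the structure theory (that $G/\mathfrak{N}$ is an abelian group, Basarab's theorem) is true but unnecessary for this: once you replace ``kernel of a homomorphism'' by ``vanishing set is a subloop of index at least $2$,'' your proof collapses into the paper's.
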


The bounds given in the theorems are sharp. They are attained when considering the octonion loop $\mathbb{O}_{16}$ and the smallest CC loop of order 6, respectively.
\begin{comment}
\begin{thm*}
    Let $G$ be a finite non-associative CC loop satisfying. Then the probability that three uniformly chosen elements in $G$ associate does not exceed $\frac{7}{8}$.
\end{thm*}
\end{comment}

\section{Preliminaries}

A set $L$ with a binary operation is called a {\bf{quasigroup}} if every equation of the form $ax = b$ or $xa = b$ has a unique solution ($x$ being the variable). 
A quasigroup with an identity element $1$ is called a loop.
A loop $G$ is called Moufang if one of the following equivalent identities holds:
\begin{align}
    z(x(zy)) = ((zx)z)y \\
    ((xz)y)z = x(z(yz)) \\
    (zx)(yz) = (z(xy))z 
\end{align}

The associator of $a,b,c$ is denoted $[a,b,c]$ and is defined as the unique solution to the equation $(ab)c = (a(bc))x$. \begin{defn}
    The left (middle, right) nucleus of $G$ is defined as the set of all $a$'s such that $[a,x,y] = 1$ ($[x,a,y] = 1$, $[x,y,a] = 1$ respectively) for any $x,y \in G$. It is denoted by $\mathfrak{N}_{\ell}(G)$ ($\mathfrak{N}_{m}(G)$, $\mathfrak{N}_{r}(G)$ respectively).
\end{defn}
In Moufang loops, all the nuclei coincide. We denote the nucleus by $\mathfrak{N}(G)$, and when $G$ is understood from context, we even use $\mathfrak{N}$. The nucleus is a normal subloop.

Moufang loops satisfy the fundamental Moufang's theorem:
\begin{thm} \label{Moufangthm}
    Let $G$ be a Moufang loop and let $x,y,z \in G$ be elements such that $[x,y,z] = 1$. Then $\langle x,y,z \rangle$ is an associative subloop. In particular, whenever the associator is trivial, it is invariant under permutations and inverses of the elements.
\end{thm}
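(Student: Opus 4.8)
The statement is the classical theorem of Moufang, and the route I would take is the autotopism calculus, which converts the three Moufang identities into structural facts about translations. Write $L_a\colon x\mapsto ax$ and $R_a\colon x\mapsto xa$, and call a triple $(\alpha,\beta,\gamma)$ of permutations of $G$ an \emph{autotopism} if $\alpha(x)\,\beta(y)=\gamma(xy)$ for all $x,y$; such triples form a group under componentwise composition, and an autotopism is pinned down by two of its components together with the images of $1$. A first, routine pass through (1)--(3), specializing variables, yields the inverse property $x^{-1}(xy)=y$ and $(yx)x^{-1}=y$, flexibility $x(yx)=(xy)x$, and the alternative laws $x(xy)=x^2y$, $(yx)x=yx^2$. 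Rewriting the identities themselves then shows that for every $a$ one has the autotopisms $(L_a,R_a,L_aR_a)$ from (3), together with $(R_aL_a,L_a^{-1},L_a)$ from (1) and $(R_a^{-1},L_aR_a,R_a)$ from (2); flexibility also gives $L_aR_a=R_aL_a$.

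The engine of the proof is diassociativity, i.e.\ the two-generator case: $\langle x,y\rangle$ is a group. I would derive this from the autotopism relations above by an induction on word length, showing that every word in $x$ and $y$ reduces, modulo the identities already in hand, to the normal form $x^m y^n$; the autotopisms let one slide a translation past a product in a controlled way, and this is where I expect most of the labor to sit.

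With diassociativity available, I turn to the three-generator statement. The hypothesis $[x,y,z]=1$ is, by the definition of the associator, exactly the single relation $(xy)z=x(yz)$. The plan is to show that this one relation, fed through the Moufang identities, forces the associator to vanish on \emph{every} triple built from $\langle x,y,z\rangle$. Concretely, I would prove that the set of $w$ satisfying $[w,u,v]=1$ for all $u,v$ in the part of the subloop constructed so far is closed under the loop operations, so that a vanishing associator propagates from the generators to all products and inverses; the Moufang identities are precisely the tool that pushes a trivial associator through a product. The main obstacle is the bookkeeping: since only the single bracketing $(xy)z=x(yz)$ is assumed, one must repeatedly use flexibility and the autotopism relations to symmetrize the associator---recovering its invariance under permutations of its arguments---before it can be propagated.

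Once $\langle x,y,z\rangle$ is shown to be associative it is a group, and then the ``In particular'' clause is immediate: in a group every associator is trivial, independently of the order of the arguments and of replacing any argument by its inverse.
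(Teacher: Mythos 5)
First, note that the paper does not prove Theorem~\ref{Moufangthm} at all: it states Moufang's theorem as a known classical fact (the version it actually uses later is Bruck's Theorem VII.4.2), so your attempt has to be measured against the classical proofs (Moufang's original, Bruck's Chapter VII, or Dr\'apal's simplified proof). Your outline --- autotopism calculus, diassociativity, then propagation of the single relation $(xy)z=x(yz)$ through $\langle x,y,z\rangle$ --- matches that literature in shape, and your first paragraph (inverse property, flexibility, alternative laws, the autotopism triples) is standard and fine. But the two steps you delegate to ``labor'' and ``bookkeeping'' contain genuine errors, not just omitted detail. For diassociativity, the claimed normal form is false: $\langle x,y\rangle$ is in general a \emph{nonabelian} group, and its elements do not reduce to $x^m y^n$. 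Take $G=S_3$ (a group, hence Moufang) with $x=(1\,2)$, $y=(1\,3)$: the set $\{x^m y^n\}$ has four elements while $\langle x,y\rangle$ has six. What the induction must actually establish is that every bracketing of a fixed word in $x^{\pm1},y^{\pm1}$ gives the same value --- a different and harder statement than reduction to a normal form.

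More seriously, your propagation step assumes exactly the deep content of the theorem. You propose that the set of $w$ with $[w,u,v]=1$ for all $u,v$ in the partially built subloop is ``closed under the loop operations.'' No general closure lemma of this type is available: Bruck's Lemma VII.4.3 makes $A'=\{x : [A,x,G]=1\}$ a subloop only because one argument ranges over \emph{all} of $G$, and the paper's own Lemma~\ref{2.2} needs the extra hypothesis $[G,G,[G,G]]=1$ merely to show that $\partial_{x,y}$ --- the annihilator of a single fixed pair --- is a subloop. In a general Moufang loop, the set of elements associating with a fixed pair (or with all pairs from a proper subset) need not be closed under multiplication, which is precisely why Moufang's theorem resists a soft proof and why all known arguments (including Dr\'apal's streamlined one) proceed by a minimal-counterexample or double induction on word length with substantial case analysis, symmetrizing and transporting associators one identity at a time. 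As written, your proof reduces the theorem to a closure claim that is false in the generality you need, so the central step is a genuine gap rather than deferred routine work.
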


%For the original proof see \cite{Moufang1935}; for a simplified proof see \cite{drapalsimp}. \\
Moufang loops are di-associative, that is, every subloop generated by two elements is associative. This fact follows immediately from Theorem \ref{Moufangthm}. In particular, this fact implies that Moufang loops have well-defined inverses, and that they satisfy the alternative laws: $(xy)y = x(yy)$ and $(xx)y = x(xy)$. \qquad

A finite loop is said to have the Lagrange property if every subloop is of order dividing the order of the loop. Moufang loops satisfy the Lagrange property, as proven in \cite{Lagrange}.

We mention Bruck's notion of an adjoint set, appearing in \cite[Page 119]{Bruck1971ASO}.
\begin{defn}
    Given a subset $A \subseteq G$, the adjoint of $A$, $A'$, is the set of all $x \in G$ such that $[A, x, G] = 1$.
\end{defn}

By \cite[Lemma VII.4.3]{Bruck1971ASO}, if $G$ is a Moufang loop, $A'$ is a subloop.

\section{Moufang Loops with Nuclear Commutators}
Throughout this section, $G$ is a Moufang loop and $\mathfrak{N}$ is its nucleus.
We start by realizing how big the nucleus can be.
If $G$ is associative, then of course $\mathfrak{N} = G$. Let us assume otherwise.

\begin{lem}
    Let $G$ be a Moufang loop and $A \leq \mathfrak{N}$, such that $G/A$ is generated by at most two elements. Then $G$ is a group.
\end{lem}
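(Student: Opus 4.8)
The plan is to show that $G$ is generated by $A$ together with two elements whose generated subloop is associative, and then exploit di-associativity. Since $G/A$ is generated by at most two elements, pick $\bar x, \bar y \in G/A$ generating the quotient, and lift them to $x, y \in G$. I claim that $G = \langle A, x, y \rangle$. Indeed, for any $g \in G$, its image $\bar g$ lies in the subloop of $G/A$ generated by $\bar x, \bar y$, so $\bar g$ is expressible as a word in $\bar x, \bar y$; lifting that word to a corresponding word $w$ in $x, y$ gives $g = w \cdot a$ for some $a \in A$ (using that cosets of $A$ partition $G$, $A$ being a normal subloop since $A \leq \mathfrak{N}$ and nuclei are normal). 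Hence every element of $G$ lies in $\langle A, x, y \rangle$, giving the claim.

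Next I would reduce the associativity of $\langle A, x, y \rangle$ to the associativity of $\langle x, y \rangle$ via the nuclear hypothesis on $A$. By di-associativity (Theorem~\ref{Moufangthm}), $\langle x, y \rangle$ is already an associative subloop. The elements of $A$ lie in the nucleus $\mathfrak{N}$, so they associate with all elements of $G$ by definition of $\mathfrak{N}$; that is, $[a, u, v] = [u, a, v] = [u, v, a] = 1$ for all $a \in A$ and all $u, v \in G$. The goal is to conclude that inserting nuclear elements into any product cannot break associativity, so that an arbitrary triple product of elements of $\langle A, x, y \rangle$ associates.

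Concretely, I would argue as follows. A general element of $G = \langle A, x, y \rangle$ can, using normality of $A$ and the nuclear property, be written in the form $w \cdot a$ with $w \in \langle x, y \rangle$ and $a \in A$: nuclear elements can be "pushed" to one side of any product because, being central to associativity, they satisfy $g(ah) = (ga)h$, $(ag)h = a(gh)$, and the coset structure lets one collect the $A$-part. To verify associativity of three such elements $w_1 a_1, w_2 a_2, w_3 a_3$, I repeatedly apply the nuclear identities to move each $a_i$ out of the relevant products, reducing the bracketing to a triple product of elements of the associative subloop $\langle x, y \rangle$ (with a single collected nuclear factor), which then associates. Since associativity holds for every triple, $G$ is associative, i.e.\ a group.

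The main obstacle I anticipate is the bookkeeping in the normal-form step: justifying rigorously that every element of $G$ can be written as $w \cdot a$ with $w \in \langle x, y \rangle$ and $a \in A$, and that products of such normal forms again reduce to this form, requires careful use of the fact that $A$ is a \emph{normal} subloop contained in the nucleus (so that $gA = Ag$ and nuclear elements factor cleanly through any parenthesization). One must be cautious because in a loop, unlike a group, the interplay between cosets and the (possibly non-associative) multiplication is delicate; the saving grace is precisely that $A \leq \mathfrak{N}$, so all associators involving an element of $A$ vanish, which is exactly what makes the collection of nuclear factors well-defined and the final reduction to $\langle x, y \rangle$ valid.
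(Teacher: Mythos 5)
Your proof is correct, and its second half takes a genuinely different route from the paper. Both arguments begin the same way: lift generators of $G/A$ to $x,y \in G$ and conclude $G = \langle A, x, y\rangle$ (the paper does this by noting $HA/A = G/A$ for $H = \langle x,y\rangle$, you do it by lifting words coset by coset; same substance). At that point the paper simply cites \cite[Theorem VII.4.2]{Bruck1971ASO} with $B = \{x\}$, $C = \{y\}$: since every associator involving an element of $A \leq \mathfrak{N}$ vanishes, $A \cup \{x,y\}$ is contained in an associative subloop, which must be all of $G$. You instead reprove the needed special case of that theorem by hand: every $g \in G$ has a normal form $g = wa$ with $w \in \langle x,y\rangle$ (associative by di-associativity) and $a \in A$; nuclear factors are collected via the conjugation $a \mapsto a^w$ defined by $aw = wa^w$, which satisfies $(ab)^w = a^w b^w$ and $a^{uv} = (a^u)^v$ using only nuclearity; and then both bracketings collapse to $((w_1w_2)w_3)\cdot\bigl(a_1^{w_2w_3}a_2^{w_3}a_3\bigr)$ and $(w_1(w_2w_3))\cdot\bigl(a_1^{w_2w_3}a_2^{w_3}a_3\bigr)$, which agree because $\langle x,y\rangle$ is associative and the $A$-parts, being nuclear, associate freely. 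This verification that the two collected nuclear factors coincide is the one step you leave implicit, and it is the crux of the bookkeeping you rightly flag; it does go through, so your plan is sound. Your version is self-contained and makes the mechanism transparent (a loop generated by an associative subloop together with a normal nuclear subgroup is associative), whereas the paper's citation is shorter and rests on Bruck's more general three-subset theorem, which does not require any of the sets to be nuclear, only the relevant associators to vanish.

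One small correction: your parenthetical justification that $A$ is normal ``since $A \leq \mathfrak{N}$ and nuclei are normal'' is not a valid inference. A subloop of the nucleus need not be normal in $G$ --- normality of the nucleus as a whole does not pass to its subloops (already in the degenerate case where $G$ is a group, $\mathfrak{N} = G$ and an arbitrary subgroup is rarely normal). Rather, normality of $A$ is implicit in the lemma's hypothesis: the quotient $G/A$ is only defined when $A$ is normal. With that reading, everything you actually use ($gA = Ag$, the conjugation $a \mapsto a^w$ landing in $A$, the cosets partitioning $G$) is legitimate.
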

\begin{proof}
    Assume $G/A = \langle xA, yA \rangle$. Denote $H = \langle x, y \rangle$. Notice that $HA/A$ contains both $xA$ and $yA$, so $HA/A = G/A$ and $HA = G$. Hence, the loop can be written as $G = \langle A, x, y \rangle$. By \cite[Theorem VII.4.2]{Bruck1971ASO}, taking $A = A, B = \set{x}, C = \set{y}$, we see that $A \cup \set{x,y}$ is contained in an associative subloop of $G$. But this can only be $G$ - so $G$ is associative.
\end{proof}

\begin{cor} \label{8lem}
    A non-associative Moufang loop $G$ satisfies $[G: \mathfrak{N}] \geq 8$.
\end{cor}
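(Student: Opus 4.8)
The plan is to combine the preceding lemma with the classification of small groups. I want to show that if $[G:\mathfrak{N}] \leq 7$, then $G$ must be associative, contradicting the assumption that $G$ is non-associative.

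First I would observe that since $\mathfrak{N}$ is a normal subloop, the quotient $G/\mathfrak{N}$ is a well-defined loop of order $[G:\mathfrak{N}]$. The key fact to invoke is that there are no non-associative Moufang loops of order less than $12$; this is a classical result, so any Moufang loop of order at most $7$ is in fact a group. Hence $G/\mathfrak{N}$ is a group of order at most $7$. Next, I would use the elementary group-theoretic fact that every group of order at most $7$ is generated by at most two elements: groups of prime order are cyclic (hence one-generated), and the only composite orders to check are $4$ and $6$, where $\mathbb{Z}/4\mathbb{Z}$, $\mathbb{Z}/2\mathbb{Z} \times \mathbb{Z}/2\mathbb{Z}$, $\mathbb{Z}/6\mathbb{Z}$, and $S_3$ are all two-generated.

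With these two facts in hand, I would apply the preceding lemma directly, taking $A = \mathfrak{N}$: since $\mathfrak{N} \leq \mathfrak{N}$ trivially and $G/\mathfrak{N}$ is generated by at most two elements, the lemma forces $G$ to be a group. This contradicts the standing assumption that $G$ is non-associative, so we must have $[G:\mathfrak{N}] \geq 8$, which by Lagrange (noting $[G:\mathfrak{N}]$ is an integer dividing $|G|$) is the desired conclusion.

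I do not anticipate a serious obstacle here, since the corollary is an immediate consequence of the lemma once the two external inputs are cited. The only point requiring care is the justification that $G/\mathfrak{N}$ is genuinely a group rather than merely a loop of small order, which rests on the nonexistence of non-associative Moufang loops below order $12$; I would make sure to cite this explicitly rather than leave it implicit. A minor secondary point is confirming the two-generation claim covers every order up to $7$, but this is routine and can be dispatched with the short case check above.
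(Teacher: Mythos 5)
Your proposal is correct and follows essentially the same route as the paper, whose proof is just the one-line ``this follows by the previous lemma and the classification of groups up to order $7$'' applied with $A = \mathfrak{N}$. You have merely made explicit the steps the paper leaves implicit --- that $G/\mathfrak{N}$ is a Moufang loop which must be a group since there are no non-associative Moufang loops of order below $12$, and that every group of order at most $7$ is two-generated --- which is exactly the intended argument.
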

\begin{proof}
    This follows by the previous lemma and the classification of groups up to order 7.
\end{proof}

This bound cannot be improved, as the 16-element octonion loop $\mathbb{O}_{16}$ (studied extensively in \cite{kirshtein}) is Moufang and $\mathfrak{N}(\mathbb{O}_{16}) = \{ \pm 1 \}$. In this case, the quotient is isomorphic to $\mathbb{Z}/2\mathbb{Z} \times \mathbb{Z}/2\mathbb{Z} \times \mathbb{Z}/2\mathbb{Z}$.

For any $x \in G$, define the left translation map $L_x : G \to G$
and the right translation map $R_x : G \to G$ by $L_x(y) = xy$ and $R_x(y) = yx$, respectively.
%For any $x \in G$, define the left translation map $L_x : G \to G$ be the left translation map, $y \mapsto xy$, and similarly, $R_x : G \to G$ be the right translation map, $y \mapsto yx$.
Denote by $L(x,y)$ the composition $L_x \circ L_y \circ L_{yx}^{-1}$. This map is a pseudo-automorphism with companion $[y,x]$ for all $x,y \in G$. Pseudo-automorphisms are introduced in \cite[Page 113]{Bruck1971ASO}.
\begin{lem} \label{2.2}
    If $[G,G,[G,G]] = 1$, then the set $\partial_{x,y} := \set{z \suchthat [z,y,x] = 1}$ is a subloop for all $x,y \in G$.
\end{lem}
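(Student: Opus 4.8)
The plan is to verify the three subloop requirements for $\partial_{x,y}$ separately, with essentially all the difficulty concentrated in closure under multiplication. That $1\in\partial_{x,y}$ is immediate, since $(1\cdot y)x = yx = 1\cdot(yx)$ forces $[1,y,x]=1$. For closure under inverses, suppose $z\in\partial_{x,y}$, so $[z,y,x]=1$; then Moufang's theorem (Theorem~\ref{Moufangthm}) guarantees that $\langle x,y,z\rangle$ is an associative subloop, and since $z^{-1}$ lies in this subloop we get $[z^{-1},y,x]=1$, i.e. $z^{-1}\in\partial_{x,y}$. It therefore remains to show that $z_1,z_2\in\partial_{x,y}$ implies $z_1z_2\in\partial_{x,y}$, and this is where the hypothesis must be used.

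For the multiplicative step I would realize $\partial_{x,y}$ as the fixed-point set of an inner mapping. The defining relation $(zy)x=z(yx)$ reads $R_{yx}^{-1}R_xR_y(z)=z$, so $\partial_{x,y}=\mathrm{Fix}(\psi)$ with $\psi:=R_{yx}^{-1}R_xR_y$; by the symmetry of a trivial associator (again Theorem~\ref{Moufangthm}) one may equally describe it through the companion left-translation map $L_{yx}^{-1}L_yL_x$, whichever side is more convenient. Exactly as for the map $L(x,y)$ introduced just above, Bruck's theory of inner mappings identifies $\psi$ as a pseudo-automorphism whose companion is the commutator $[y,x]$. The hypothesis now enters decisively: $[G,G,[G,G]]=1$ (equivalently $[[G,G],G,G]=1$, since a vanishing associator is symmetric) says precisely that every commutator lies in the right nucleus, hence, all nuclei coinciding in a Moufang loop, in $\mathfrak{N}$. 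In particular the companion $[y,x]\in\mathfrak{N}$.

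The final step is the general principle that the fixed-point set of a pseudo-automorphism whose companion is nuclear is closed under multiplication. Writing the defining identity with the companion $c=[y,x]$ on the left, $c\cdot\psi(uv)=(c\cdot\psi(u))\cdot\psi(v)$, and substituting $u=z_1$, $v=z_2$ with $\psi(z_1)=z_1$ and $\psi(z_2)=z_2$, gives $c\cdot\psi(z_1z_2)=(cz_1)z_2$. Because $c\in\mathfrak{N}$ we have $(cz_1)z_2=c(z_1z_2)$, and cancelling the bijection $L_c$ yields $\psi(z_1z_2)=z_1z_2$, that is $z_1z_2\in\partial_{x,y}$. Thus the entire use of the hypothesis is isolated to the single associativity $[c,z_1,z_2]=1$, which holds simply because $c$ is nuclear.

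The step I expect to be the main obstacle is bookkeeping the side on which the companion is attached. With the companion written on the right, the same substitution instead forces $cz_2=z_2c$, i.e. that $c$ commute with $z_2$, which nuclearity alone does not supply; so the computation must be run on the one-sided form of the pseudo-automorphism identity for which nuclearity of the companion is exactly what is required. Since Moufang loops have the inverse property and $\partial_{x,y}$ has already been shown to be closed under inverses, one can pass between $\psi$ and its conjugate $z\mapsto\psi(z^{-1})^{-1}$ by the inversion map to land on the convenient form. Checking that this conjugate is again a pseudo-automorphism with the same nuclear companion and with fixed set still equal to $\partial_{x,y}$ is the technical heart of the argument, and is the place where I would be most careful.
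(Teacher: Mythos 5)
Your proposal is correct and takes essentially the paper's own route: the paper likewise realizes $\partial_{x,y}$ as the fixed-point set of the inner pseudo-automorphism with companion a commutator (it uses $L(x,y)=L_x\circ L_y\circ L_{yx}^{-1}$ with companion $[y,x]$ rather than your $R_{yx}^{-1}R_xR_y$, citing Bruck's formula $L(x,y)(z)=z[z,y,x]^{-1}$), observes that the hypothesis makes the companion nuclear so that the map is an automorphism, and concludes because fixed-point sets of automorphisms are subloops---your explicit closure computation and the separate handling of $1$ and inverses via Moufang's theorem are just an unpacking of this. One small correction to your final paragraph: the worry is unfounded, since with the right-companion form $\theta(uv)\cdot c=\theta(u)\cdot(\theta(v)c)$ substituting fixed $z_1,z_2$ gives $\theta(z_1z_2)\cdot c=z_1(z_2c)=(z_1z_2)c$ by nuclearity of $c$ alone (no commutation $cz_2=z_2c$ is needed), so the conjugation-by-inversion detour is unnecessary.
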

\begin{proof}
    By assumption, $[y,x] \in \mathfrak{N}$, so $L(x,y)$ is an automorphism. By \cite[Lemma VII.5.4]{Bruck1971ASO}, $L(x,y)(z) = z[z,y,x]^{-1}$. The set of fixed points of an automorphism forms a subloop, and $L(x,y)(z) = z$ if and only if $[z,y,x] = 1$.
\end{proof}

We are finally ready to prove the main theorem.

\begin{thm}
    Let $G$ be a finite non-associative Moufang loop that satisfies $[G, G, [G, G]] = 1$. Then the probability that three elements $x,y,z$ chosen uniformly associate is at most $\frac{43}{64}$.
\end{thm}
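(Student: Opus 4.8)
The plan is to fix the first coordinate and count. For $x \in G$ write $A(x) = |\{(y,z) \in G \times G : [x,y,z] = 1\}|$, so that the quantity to bound is $P = |G|^{-3}\sum_{x \in G} A(x)$. If $x \in \mathfrak{N}$ then every pair $(y,z)$ associates with $x$ and $A(x) = |G|^2$; by \Cref{8lem} the nuclear elements make up a fraction $[G:\mathfrak{N}]^{-1} \le \tfrac18$ of $G$. Everything therefore hinges on a sufficiently strong bound for $A(x)$ when $x \notin \mathfrak{N}$.

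For fixed $x \notin \mathfrak{N}$ and fixed $y$, the set $\{z : [x,y,z] = 1\}$ is a subloop of $G$ by \Lref{2.2} and \Tref{Moufangthm}; it is all of $G$ exactly when $[x,y,z] = 1$ for every $z$, that is, when $y$ lies in Bruck's adjoint subloop $\{x\}'$, and it is a proper subloop --- of index at least $2$ by the Lagrange property --- otherwise. Summing the resulting counts over $y$ gives $A(x) \le |\{x\}'|\,|G| + (|G| - |\{x\}'|)\tfrac{|G|}{2} = \tfrac{|G|}{2}(|G| + |\{x\}'|)$, hence $A(x)/|G|^2 \le \tfrac12 + \tfrac12 [G : \{x\}']^{-1}$.

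The heart of the argument is the claim that $[G:\{x\}'] \ge 4$ for every $x \notin \mathfrak{N}$. I would prove this by passing to $Q = G/\mathfrak{N}$ and showing, from the hypothesis $[G,G,[G,G]] = 1$ together with the Moufang identities, that the associator $(y,z) \mapsto [x,y,z]$ takes values in $\mathfrak{N}$ and descends to a biadditive, alternating pairing $b_x \colon Q \times Q \to \mathfrak{N}$ (alternating because $\langle x,y\rangle$ is associative, so $[x,y,y] = 1$). Since $x \notin \mathfrak{N}$ the form $b_x$ is nonzero, its radical is exactly $\{x\}'/\mathfrak{N}$, and a nonzero alternating pairing has radical of even codimension; thus $[G:\{x\}'] = [Q : \{x\}'/\mathfrak{N}]$ is a perfect square exceeding $1$, i.e. at least $4$.

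Granting the claim, $A(x)/|G|^2 \le \tfrac12 + \tfrac18 = \tfrac58$ for $x \notin \mathfrak{N}$, so with $t = [G:\mathfrak{N}]^{-1}$ we obtain $P \le t + (1-t)\tfrac58 = \tfrac58 + \tfrac38 t \le \tfrac58 + \tfrac{3}{64} = \tfrac{43}{64}$, using $t \le \tfrac18$ from \Cref{8lem}. Equality forces $[G:\mathfrak{N}] = 8$, every $\{x\}'$ of index exactly $4$, and every proper $\partial_{x,y}$ of index exactly $2$ --- precisely the configuration realised by $\mathbb{O}_{16}$, where $b_x$ is the determinant form on $\mathbb{F}_2^3$. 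The main obstacle is the structural input behind the claim: extracting genuine biadditivity of the associator modulo $\mathfrak{N}$ from the nuclear-commutator hypothesis, and justifying the even-codimension (perfect-square) conclusion for an alternating pairing valued in a nucleus that need not be an elementary abelian $2$-group.
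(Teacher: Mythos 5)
Your outer counting scheme coincides exactly with the paper's (condition on $x \in \mathfrak{N}$; for $x \notin \mathfrak{N}$ split according to $y \in \set{x}'$ or not), and granting your claim $[G : \set{x}'] \geq 4$ the arithmetic correctly delivers $\frac{43}{64}$. The gap is precisely where you flag it: the proposed proof of that claim does not go through. First, the hypothesis $[G,G,[G,G]] = 1$ makes \emph{commutators} nuclear; it does not give $[x,y,z] \in \mathfrak{N}$, so the target of your pairing $b_x$ is not available without a further argument you never supply. Second, biadditivity of $(y,z) \mapsto [x,y,z]$ modulo $\mathfrak{N}$ is not a consequence of the Moufang laws together with nuclear commutators; even well-definedness on $Q = G/\mathfrak{N}$ is delicate, since for $n \in \mathfrak{N}$ a direct computation gives $[x,y,zn] = n^{-1}[x,y,z]n$, so the associator descends only up to conjugacy in the alleged target. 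Third, the even-codimension (perfect-square radical) conclusion is a theorem about alternating forms on vector spaces, or with care about finite \emph{abelian} groups paired nondegenerately into $\mathbb{Q}/\mathbb{Z}$; here $Q$ carries no a priori linear or even abelian structure, and the values of $b_x$ need not commute. So the heart of your argument is, as you concede, a conjecture rather than a proof.

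The paper closes this gap with a much softer argument, using only ingredients you already have on the board. For $x \notin \mathfrak{N}$ choose $u,v$ with $[x,u,v] \neq 1$. By Lemma~\ref{2.2} --- and this is the only place the nuclear-commutator hypothesis enters: $[u,x] \in \mathfrak{N}$ upgrades the pseudo-automorphism $L(x,u)$ to an automorphism whose fixed-point set is exactly $\partial_{x,u}$ --- the set $\partial_{x,u}$ is a subloop, and by Bruck's lemma $\set{x}'$ is a subloop contained in it. The chain $\set{x}' \leq \partial_{x,u} \leq G$ is strict at both steps: $u \in \partial_{x,u} \setminus \set{x}'$ (di-associativity gives $[u,u,x] = 1$, while $[x,u,v] \neq 1$ keeps $u$ out of $\set{x}'$), and $v \notin \partial_{x,u}$. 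Two applications of the fact that a proper subquasigroup has at most half the order of the ambient quasigroup then give $\abs{\set{x}'} \leq \abs{G}/4$, i.e.\ index at least $4$, with no parity, biadditivity, or perfect-square input whatsoever. Substituting this into your bound for $A(x)$ recovers the theorem verbatim; your perfect-square picture is accurate for $\mathbb{O}_{16}$ (where the quotient is $\mathbb{F}_2^3$ and the index is indeed $4$), but proving it in general would demand structure the hypotheses do not provide, and the theorem does not need it.
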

\begin{proof}
    First of all, notice that the event $x,y,z$ associate can be split into three disjoint cases:
    \begin{enumerate}
        \item $x \in \mathfrak{N}$;
        \item $x \notin \mathfrak{N}$ and $y \in \set{x}'$;
        \item $x \notin \mathfrak{N}$ and $y \notin \set{x}'$ and $z \in \partial_{x,y}$.
    \end{enumerate}
    The probability of the first case is $\frac{\abs{\mathfrak{N}}}{\abs{G}}$. Assume $x \notin \mathfrak{\mathfrak{N}}$. So there exist $u,v \in G$ such that $[x,u,v] \neq 1$. Thus, the chain of subloops $$\mathfrak{N} \leq \set{x}' \leq \partial_{x,u} \leq G$$ is strictly ascending. A well-known exercise in quasigroup theory states that a proper subquasigroup is of size at most half of the quasigroup. In particular, this means that $\abs{\set{x}'} \leq \frac{\abs{G}}{4}$. So the probability of the second case is bounded by $(1-\frac{\abs{\mathfrak{N}}}{\abs{G}}) \cdot \frac{1}{4}$. The probability of the third case is bounded by $(1-\frac{\abs{\mathfrak{N}}}{\abs{G}}) \cdot \frac{3}{4} \cdot \frac{1}{2} = (1-\frac{\abs{\mathfrak{N}}}{\abs{G}}) \cdot \frac{3}{8}$, since there exists $v \in G$ such that $[x,y,v] \neq 1$, so $v \notin \partial_{x,y}$. Summing everything up, the probability of $x,y,z$ associating is at most $\frac{\abs{\mathfrak{N}}}{\abs{G}} \cdot \frac{3}{8} + \frac{5}{8}$. By Corollary~\ref{8lem}, $\frac{\abs{\mathfrak{N}}}{\abs{G}} \leq \frac{1}{8}$, so the best case scenario is when $\frac{\abs{\mathfrak{N}}}{\abs{G}} = \frac{1}{8}$, and we get that the probability is bounded by $\frac{43}{64}$. 
\end{proof}

\begin{cor}
    If the probability that $3$ elements chosen uniformly associate in a Moufang loop $G$ with nuclear commutators is strictly greater than $\frac{43}{64}$, then $G$ is a group.
\end{cor}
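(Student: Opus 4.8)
The plan is to obtain this corollary as the immediate contrapositive of the theorem just proved. The theorem asserts that every finite \emph{non-associative} Moufang loop with nuclear commutators has associativity probability at most $\frac{43}{64}$; the corollary is simply the reformulation that exceeding this bound forces associativity, hence forces $G$ to be a group. So the entire quantitative content already resides in the theorem, and the work here is purely logical bookkeeping.

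Concretely, I would argue as follows. Let $G$ be a finite Moufang loop with nuclear commutators, that is, $[G,G,[G,G]]=1$, and suppose the probability that three uniformly chosen elements associate is strictly greater than $\frac{43}{64}$. Assume for contradiction that $G$ is not a group. For a loop, being a group is equivalent to being associative, so this negation says precisely that $G$ is non-associative. But then $G$ is a finite non-associative Moufang loop satisfying $[G,G,[G,G]]=1$, so it meets exactly the hypotheses of the theorem, whence the associativity probability is at most $\frac{43}{64}$. This contradicts the assumption that the probability strictly exceeds $\frac{43}{64}$. Therefore $G$ is associative, and an associative Moufang loop is a group.

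There is no genuine obstacle here, since all the difficulty was discharged in establishing the theorem via the chain $\mathfrak{N} \leq \set{x}' \leq \partial_{x,u} \leq G$ and the index bound of Corollary~\ref{8lem}. The only points worth flagging are two tacit identifications. First, the phrase ``probability that three elements chosen uniformly associate'' presupposes a finite loop with the uniform measure, so finiteness of $G$ is built into the hypothesis and the counting argument of the theorem applies verbatim. Second, I would make explicit that ``not a group'' and ``non-associative'' coincide for Moufang loops, so that negating the conclusion lands one squarely inside the hypothesis of the theorem and the contrapositive is exact. With these identifications in place the deduction is complete and requires no further computation.
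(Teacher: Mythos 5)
Your proposal is correct and coincides with the paper's own (implicit) reasoning: the corollary is stated there without proof precisely because it is the immediate contrapositive of the preceding theorem, which is exactly the deduction you carry out. Your two flagged identifications --- that the uniform-probability hypothesis presupposes finiteness, and that ``not a group'' equals ``non-associative'' for a loop --- are sound and, if anything, make the logical step more explicit than the paper does.
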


The bound is sharp. It is attained when considering the octonion loop $\mathbb{O}_{16}$.

\section{Conjugacy Closed Loops}

A loop $G$ is called a conjugacy closed loop (or CC loop) if it satisfies the following two identities:

\begin{align}
    z(xy) = R_z^{-1}(zx) \cdot (zy) \\
    (xy)z = (xz) \cdot L_z^{-1}(yz) 
\end{align}

Conjugacy closed loops can also be interpreted as loops in which left translation maps and right translation maps are closed under conjugation, that is, for all $x,y \in G$, $R_x^{-1} \circ R_y \circ R_x$ is a right translation map and $L_x^{-1} \circ L_y \circ L_x$ is a left translation map. 

Associators of CC loops are invariant under permutations. Moreover, CC loops satisfy the Lagrange property; see \cite[Corollary 3.2]{CCcite}.

We prove a similar theorem for CC loops.
\begin{thm}
    Let $G$ be a finite non-associative CC loop. Then the probability that three elements $x,y,z$ chosen uniformly associate is at most $\frac{7}{8}$.
\end{thm}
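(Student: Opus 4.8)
The plan is to mirror the structure of the preceding Moufang argument, decomposing the associating event by how its first entry sits relative to the nucleus, but replacing the diassociativity input (which forced the index $\geq 8$ bound of Corollary~\ref{8lem}) by the bare Lagrange property. Since a single non-associative CC loop such as the order-$6$ example has nucleus of index exactly $2$, one cannot hope for more than an index-$2$ estimate on $\mathfrak{N}$, and it is precisely this that produces the weaker constant $\frac{7}{8}$ in place of $\frac{43}{64}$.

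Concretely, write $\beta = \frac{\abs{\mathfrak{N}}}{\abs{G}}$ and, for $x \notin \mathfrak{N}$, set $\set{x}' = \set{y \suchthat [x,y,z]=1 \text{ for all } z}$ and $\partial_{x,y} = \set{z \suchthat [x,y,z]=1}$; permutation-invariance of associators in a CC loop makes these definitions symmetric and well-behaved. Using that $x \in \mathfrak{N}$ forces association, and that $y \in \set{x}'$ forces association for every $z$, the associating event decomposes exactly as
\[
\mathbb{P}(\text{associate}) = \beta + \mathbb{P}(x \notin \mathfrak{N},\, y \in \set{x}') + \mathbb{P}(x \notin \mathfrak{N},\, y \notin \set{x}',\, z \in \partial_{x,y}).
\]
Granting that $\set{x}'$ and $\partial_{x,y}$ are subloops --- proper exactly when $x \notin \mathfrak{N}$, respectively $y \notin \set{x}'$ --- the Lagrange property bounds each by $\frac{1}{2}$. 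Writing $p_2$ for the middle term and using $\mathbb{P}(x\notin\mathfrak N,\, y\notin\set{x}') = (1-\beta)-p_2$, the last term is at most $\frac12\big((1-\beta)-p_2\big)$, so the total is at most $\beta + p_2 + \frac12\big((1-\beta)-p_2\big) = \beta + \frac{1-\beta}{2} + \frac{p_2}{2}$. Since $p_2 \leq \frac{1-\beta}{2}$ this is at most $\frac34 + \frac{\beta}{4}$, and as $\mathfrak{N}$ is a proper subloop of the non-associative loop $G$, Lagrange gives $\beta \leq \frac12$, whence the bound $\frac34 + \frac18 = \frac78$.

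The genuine obstacle is the subloop claim for $\set{x}'$ and $\partial_{x,y}$, since Bruck's Moufang-specific results (and Lemma~\ref{2.2}, which rested on nuclear commutators and pseudo-automorphisms) are unavailable here. I would instead exploit the defining conjugacy-closure identities: in a CC loop the nucleus is normal with abelian quotient and all associators are nuclear, and I expect the map $z \mapsto [x,y,z]$ to be a crossed homomorphism $G \to \mathfrak{N}$ (equivalently, $\partial_{x,y}$ to arise as the fixed-point set of an inner mapping that conjugacy-closure forces to be an automorphism, exactly paralleling the role of $L(x,y)$ in Lemma~\ref{2.2}). Its kernel, hence fixed set, is then a subloop, and the identical argument with the slots permuted handles $\set{x}'$. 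Verifying this homomorphism/automorphism property directly from the CC identities, together with confirming propriety via $\set{x}' = G \Leftrightarrow x \in \mathfrak{N}$ and $\partial_{x,y} = G \Leftrightarrow y \in \set{x}'$, is where the real work lies; once it is in place the probability estimate above is routine.
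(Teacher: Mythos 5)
Your probabilistic skeleton coincides exactly with the paper's: the same three disjoint events, the same totals (with $\beta = \frac{\abs{\mathfrak{N}}}{\abs{G}}$, the bound $\beta + \frac{3(1-\beta)}{4} = \frac{3}{4} + \frac{\beta}{4}$, then $\beta \leq \frac{1}{2}$ because $\mathfrak{N}$ is a proper subloop, giving $\frac{7}{8}$; your rearrangement via $p_2$ is equivalent arithmetic). The genuine gap is the one you flag yourself: nothing in your write-up actually establishes $\abs{\partial_{x,y}} \leq \frac{\abs{G}}{2}$ when $y \notin \set{x}'$, respectively $\abs{\set{x}'} \leq \frac{\abs{G}}{2}$ when $x \notin \mathfrak{N}$, and this is the only place where the CC axioms do any work --- everything else is bookkeeping valid in any finite loop with the Lagrange property. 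The paper closes this step by citation: for $x \notin \mathfrak{N}$ it picks $u,v$ with $[x,u,v] \neq 1$, invokes \cite[Lemma 2.9(5)]{PACC} to get that $\partial_{x,u}$ is a subloop, notes it is proper since $v \notin \partial_{x,u}$, and then handles $\set{x}'$ not by proving it is a subloop at all but via the containment $\set{x}' \subseteq \partial_{x,u}$ --- a shortcut also available to you, which would let you drop the subloop claim for $\set{x}'$ entirely. As written, a ``grant the lemma, sketch a plan'' move at the unique load-bearing point leaves the proposal incomplete rather than wrong.

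That said, your crossed-homomorphism instinct is the right repair, and it can even bypass subloops and Lagrange altogether for this step. Kunen proves (Lemma 6.2 of \emph{The structure of conjugacy closed loops}) an identity for CC associators which, after permuting variables, yields: if $b \in \partial_{x,y}$ then $[x,y,z] = [x,y,bz]$ for every $z$. Hence, fixing any $z \notin \partial_{x,y}$ (such $z$ exists precisely because $y \notin \set{x}'$), the bijection $R_z$ maps $\partial_{x,y}$ injectively into its complement, so $\abs{\partial_{x,y}} \leq \abs{G} - \abs{\partial_{x,y}}$, i.e.\ $\abs{\partial_{x,y}} \leq \frac{\abs{G}}{2}$ --- no subloop structure needed. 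By contrast, the automorphism/fixed-point route you propose is the delicate one: in Lemma~\ref{2.2} the hypothesis of nuclear commutators was exactly what upgraded the pseudo-automorphism $L(x,y)$ to an automorphism, and the CC facts you would lean on (associators nuclear, $G/\mathfrak{N}$ abelian, hence commutators nuclear) are true but are themselves nontrivial theorems you neither prove nor cite. So: same decomposition and endgame as the paper, correct arithmetic, but the key lemma is asserted with an unexecuted plan; patch it either with the paper's citation to \cite{PACC} or with the displacement argument above.
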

\begin{proof}
    Let $G$ be a finite non-associative CC loop.
    Similarly to the proof for Moufang loops with nuclear commutators, the event $x,y,z$ associate can be split into three disjoint cases:
    \begin{enumerate}
        \item $x \in \mathfrak{N}$;
        \item $x \notin \mathfrak{N}$ and $y \in \set{x}'$;
        \item $x \notin \mathfrak{N}$ and $y \notin \set{x}'$ and $z \in \partial_{x,y}$.
    \end{enumerate}
     The probability of the first case is $\frac{\abs{\mathfrak{N}}}{\abs{G}}$. Now assume $x \notin \mathfrak{N}$. So, there exist $u,v \in G$ such that $[x,u,v] \neq 1$. Using the same notation as in Lemma \ref{2.2}, the set $\partial_{x,u}$ is a subloop of $G$ by \cite[Lemma 2.9(5)]{PACC}, but $v \notin \partial_{x,u}$, so $\abs{\partial_{x,u}} \leq \frac{\abs{G}}{2}$. The set $\set{x}'$ is contained in $\partial_{x,u}$, hence $\abs{\set{x}'} \leq \frac{\abs{G}}{2}$ as well. Taking this fact into account, we find that the probability of the second case is bounded by $(1-\frac{\abs{\mathfrak{N}}}{\abs{G}}) \cdot \frac{1}{2}$, and the probability of the third case is bounded by $(1-\frac{\abs{\mathfrak{N}}}{\abs{G}}) \cdot \frac{1}{2} \cdot \frac{1}{2}$. In summary, the probability of the associating $x,y,z$ is at most $\frac{\abs{\mathfrak{N}}}{\abs{G}} \cdot \frac{1}{4} +  \frac{3}{4}$. But since $\mathfrak{N} \neq G$, we get $\abs{\mathfrak{N}} \leq \frac{\abs{G}}{2}$. So, the probability is bounded by $\frac{1}{2} \cdot \frac{1}{4} + \frac{3}{4} = \frac{7}{8}$.
\end{proof}

\begin{cor}
    If the probability that $3$ elements chosen uniformly associate in a CC loop $G$ is strictly greater than $\frac{7}{8}$, then $G$ is a group.
\end{cor}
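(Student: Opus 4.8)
The plan is to mirror the structure of the theorem for Moufang loops with nuclear commutators, decomposing the ``associating triple'' event by conditioning on where the first coordinate lands. I would begin by fixing a finite non-associative CC loop $G$ with nucleus $\mathfrak{N}$, and partitioning the event that $x,y,z$ associate into the three disjoint cases according to whether $x \in \mathfrak{N}$, whether $y \in \set{x}'$ (given $x \notin \mathfrak{N}$), and finally whether $z \in \partial_{x,y}$. The first case contributes exactly $\frac{\abs{\mathfrak{N}}}{\abs{G}}$, since any $x \in \mathfrak{N}$ associates with every pair. The whole argument then reduces to bounding the conditional probabilities in cases (2) and (3), together with a bound on $\abs{\mathfrak{N}}$ itself.

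For the conditional bounds I would exploit the fact that for CC loops the relevant adjoint-type sets are subloops and that a proper subloop has index at least $2$ by Lagrange. Concretely, since $x \notin \mathfrak{N}$ there exist $u,v$ with $[x,u,v] \neq 1$, so $v \notin \partial_{x,u}$; invoking the cited result that $\partial_{x,u}$ is a subloop, it is therefore \emph{proper}, hence $\abs{\partial_{x,u}} \leq \frac{\abs{G}}{2}$. Because $\set{x}' \subseteq \partial_{x,u}$, the same bound $\frac{1}{2}$ governs case (2). For case (3) I would similarly note that once $y \notin \set{x}'$ is fixed, there is a witness $v$ with $[x,y,v] \neq 1$, so $\partial_{x,y}$ is again a proper subloop of index at least $2$, giving a further factor of $\frac{1}{2}$. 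Multiplying the conditional factors, case (2) is bounded by $(1-\frac{\abs{\mathfrak{N}}}{\abs{G}})\cdot \frac{1}{2}$ and case (3) by $(1-\frac{\abs{\mathfrak{N}}}{\abs{G}})\cdot \frac{1}{2}\cdot \frac{1}{2}$.

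Summing the three contributions and writing $t = \frac{\abs{\mathfrak{N}}}{\abs{G}}$, the total is $t + (1-t)\frac{1}{2} + (1-t)\frac{1}{4} = t\cdot\frac{1}{4} + \frac{3}{4}$, an increasing function of $t$. The final ingredient is the bound $t \leq \frac{1}{2}$, which holds because $G$ is non-associative, so $\mathfrak{N} \neq G$, and $\mathfrak{N}$ is a proper subloop of index at least $2$ by Lagrange. Substituting $t = \frac{1}{2}$ yields the claimed bound $\frac{1}{2}\cdot\frac{1}{4} + \frac{3}{4} = \frac{7}{8}$.

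The main obstacle, and the point where the CC case genuinely differs from the Moufang-with-nuclear-commutators case, is that I do not get the sharper index estimate $\abs{\set{x}'} \leq \frac{\abs{G}}{4}$ that drove the $\frac{43}{64}$ bound. There, the strictly ascending chain $\mathfrak{N} \leq \set{x}' \leq \partial_{x,u} \leq G$ of distinct proper subloops forced two independent halvings; here I can only rely on a single proper-subloop step for each of $\set{x}'$ and $\partial_{x,y}$, and I have no analogue guaranteeing that $\set{x}'$ sits \emph{strictly} below $\partial_{x,u}$ or that $\mathfrak{N}$ has index at least $8$. Thus the careful part is justifying that in the CC setting the only structural inputs available are (i) the subloop property of $\partial_{x,u}$ from the cited lemma, and (ii) the Lagrange property, and verifying that these suffice for the weaker $\frac{7}{8}$ bound without overclaiming the tighter Moufang estimates.
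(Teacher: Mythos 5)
Your proposal is correct and follows essentially the same route as the paper: the identical three-case decomposition, the subloop property of $\partial_{x,u}$ from the cited lemma together with $\set{x}' \subseteq \partial_{x,u}$ to get the factor $\frac{1}{2}$ in cases (2) and (3), and the bound $\frac{\abs{\mathfrak{N}}}{\abs{G}} \leq \frac{1}{2}$ from $\mathfrak{N}$ being proper, yielding $\frac{1}{4}t + \frac{3}{4} \leq \frac{7}{8}$, after which the corollary is the contrapositive. Your closing remark correctly identifies why the CC bound is weaker than the Moufang $\frac{43}{64}$ bound, namely the absence of the strictly ascending chain and the index-$8$ estimate on the nucleus.
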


This bound is sharp. It is attained when considering the smallest CC loop, which is of order 6.

\section{Concluding Remarks}

\begin{enumerate}
    \item The author suspects that the theorem might be false for general Moufang loops, due to the concluding remark in \cite{suspicion}.
    \item Verifications of the probability for the loop $\mathbb{O}_{16}$ as well as the smallest CC loop of order 6 were made using the LOOPS package \cite{loops} in GAP.
\end{enumerate}

\section*{Acknowledgement}

The author thanks Uzi Vishne for the helpful discussions and suggestions.

\section*{Disclosure Statement}

The author reports there are no competing interests to declare.

\bibliographystyle{abbrv}
\bibliography{final}
\end{document}